\newtheorem{thm}{Theorem}[section]
\newtheorem{dfn}[thm]{Definition}
\newtheorem{lem}[thm]{Lemma}
\newtheorem{prop}[thm]{Proposition}
\theoremstyle{remark}
\newtheorem{rem}[thm]{Remark}
\newtheorem{ex}[thm]{Example}
\DeclareMathOperator{\scal}{Scal}
\DeclareMathOperator{\diam}{diam}
\DeclareMathOperator{\aire}{area}
\DeclareMathOperator{\sys}{sys}
\DeclareMathOperator{\str}{stretch}
\newcommand{\eps}{\varepsilon}
\newcommand{\ph}{\varphi}
\title{On the 2-systole of stretched enough positive scalar curvature metrics on $\mathbb{S}^2\times\mathbb{S}^2$}
\author{Thomas Richard\footnote{LAMA, Univ Paris Est Creteil, Univ Gustave Eiffel, CNRS, F-94010, Créteil, France}}
\begin{document}
\maketitle

\begin{abstract}
  We use recent developments by
  Gromov and Zhu to derive
  an upper bound for the 2-systole of the homology class of
  $\mathbb{S}^2\times\{\ast\}$ in a $\mathbb{S}^2\times\mathbb{S}^2$
  with a positive scalar curvature metric such that the set of
  surfaces homologous to $\mathbb{S}^2\times\{\ast\}$ is wide enough in
  some sense.
\end{abstract}

Recall that the systole of a compact Riemannian manifold $(M^n,g)$ is the
length of the shortest non contractible loop in $(M^n,g)$. In the
middle of the 20th century, Loewner and Pu proved sharp upper bounds
on the systole of any metric on $\mathbb{T}^2$ or $\mathbb{RP}^2$ in
term of its volume.
These were vastly generalized when in the early 80s Gromov gave similar
(non sharp) bounds for $n$-dimensional essential manifolds. (See
\cite{berger1993systoles} for the full story until 1993.)

The $k$-systole $\sys_k(g)$ of $(M^n,g)$ is the infimum  of the
$k$-dimensional volume over all homologically
non trivial $k$-cycles. In general, for
$k\geq 2$, the $k$-systole of a manifold cannot be bounded by the
volume alone. In particular, Katz and Suciu showed in
\cite{katz1999volume} that one can find
metrics on $\mathbb{S}^2\times\mathbb{S}^2$ whose volume is $1$ but
whose 2-systole can be arbitrary large. (Similar examples in higher
dimension where already known to Gromov, see again \cite{berger1993systoles}.)

One way to circumvent this is to introduce the more subtle ``stable
systoles'' (see again \cite{berger1993systoles}), another way is to
try to introduce curvature restrictions. This second route was first
considered in dimension $3$ by Bray, Brendle and Neves for manifolds
such as $\mathbb{S}^2\times\mathbb{S}^1$ with positive scalar
curvature metrics in
\cite{bray2010rigidity}. Recently, Zhu treated the case of
$\mathbb{S}^2\times\mathbb{T}^n$ ($n+2\leq 7$) in
\cite{zhu2020rigidity}, see Section \ref{sec:results-gromov-zhu} for a
precise statement. 

Here we will show how Zhu's result together with recent developments
due to Gromov gives some progress in the case
of $\mathbb{S}^2\times\mathbb{S}^2$.

Let $\mathcal{S}_\ell$ be the set of embedded surfaces in
$\mathbb{S}^2\times\mathbb{S}^2$ which are in the same homology class
as $\mathbb{S}^2\times\{\ast\}$.

\begin{dfn}
  Let $g$ be a Riemannian metric on
  $\mathbb{S}^2\times\mathbb{S}^2$, the left stretch of $g$, denoted
  by $\str_{\ell}(g)$ is defined as
  \[\str_{\ell}(g)=\sup_{S_1,S_2\in\mathcal{S}_\ell}d_g\left(S_1,S_2\right).\]
\end{dfn}
\begin{ex}
  For a product metric $g=g_1\oplus g_2$ on
  $\mathbb{S}^2\times\mathbb{S}^2$, the left stretch is
  $\str_{\ell}(g)=\diam(\mathbb{S}^2,g_2)$ and is achieved by
  $S_1=\mathbb{S}^2\times\{p_1\}$ and $S_2=\mathbb{S}^2\times\{p_2\}$
  where $d_{g_2}(p_1,p_2)=\diam(\mathbb{S}^2,g_2)$.
\end{ex}
As the example above shows, the left stretch is a measure of the
1-dimensional size of the right factor of $(\mathbb{S}^2\times
\mathbb{S}^2,g)$. It is bounded above by the diameter of $g$.
\begin{dfn}\label{dfn-str}
  Let $g$ be a Riemannian metric on
  $\mathbb{S}^2\times\mathbb{S}^2$, the left 2-systole of $g$, denoted
  by $\sys_{2,\ell}(g)$ is defined as
  \[\sys_{2,\ell}(g)=\inf_{S\in\mathcal{S}_\ell}\aire_g(S).\]
\end{dfn}
\begin{ex}
  For a product metric $g=g_1\oplus g_2$ on
  $\mathbb{S}^2\times\mathbb{S}^2$, the left 2-systole is 
  $\sys_{2,\ell}(g)=\aire(\mathbb{S}^2,g_1)$ and is achieved by
  any $S=\mathbb{S}^2\times\{\ast\}$.
\end{ex}

The theorem below is the main result of the paper. It says that
positive scalar curvature metrics on $\mathbb{S}^2\times\mathbb{S}^2$
with large left stretch cannot have large left 2-systole.
\begin{thm}\label{thm-main}
  Let $g$ be a metric on $\mathbb{S}^2\times\mathbb{S}^2$ with
  $\scal_g\geq 4$.

  If $s=\str_{\ell}(\mathbb{S}^2\times \mathbb{S}^2,g) >
  \tfrac{\sqrt{3}\pi}{2}$, then $\sys_{\ell}(\mathbb{S}^2\times
  \mathbb{S}^2,g)\leq \frac{8\pi s^2}{4s^2-3\pi^2}$. Moreover there is
  an embedded $2$-sphere whose area is at most $\frac{8\pi s^2}{4s^2-3\pi^2}$.
\end{thm}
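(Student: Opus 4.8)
The plan is to convert the large left stretch into a Riemannian band and then cut it down twice: first by a $\mu$-bubble in the spirit of Gromov, which costs $\tfrac{3\pi^2}{s^2}$ of scalar curvature, and then by the area-minimizing argument of Zhu (which in this dimension is that of Bray--Brendle--Neves for $\mathbb{S}^2\times\mathbb{S}^1$), which trades the surviving scalar curvature for the area bound. To build the band, fix $w<s$ and use the definition of $\str_\ell$ to pick $S_1,S_2\in\mathcal S_\ell$ with $d_g(S_1,S_2)>w$; in particular $S_1\cap S_2=\emptyset$. Since $[\mathbb{S}^2\times\{\ast\}]$ has vanishing self-intersection, each $S_i$ (which one may take orientable, the stretch being a supremum) has trivial normal bundle, hence a tubular neighbourhood diffeomorphic to $S_i\times D^2$. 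Excising small such neighbourhoods leaves a compact manifold $W$, diffeomorphic to $\mathbb{S}^2\times\mathbb{S}^1\times[0,1]$, carrying the restriction of $g$, so that $\scal\ge 4$ on $W$, with $\partial_\pm W\cong\mathbb{S}^2\times\mathbb{S}^1$, with $\ell:=d_g(\partial_-W,\partial_+W)\ge w$ once the neighbourhoods are thin enough, and such that the class of $\mathbb{S}^2\times\{\ast\}$ survives on $\partial_-W$ and maps to $[\mathbb{S}^2\times\{\ast\}]\in H_2(\mathbb{S}^2\times\mathbb{S}^2)$.

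Next comes the $\mu$-bubble. On $W$ set $\rho=d_g(\cdot,\partial_-W)$ and take the warping $h(\rho)=-\tfrac{3\pi}{2w}\tan\!\big(\tfrac{\pi}{w}(\rho-\tfrac\ell2)\big)$, which (because $\ell\ge w$) blows up to $+\infty$ at $\rho=\tfrac{\ell-w}{2}$ and to $-\infty$ at $\rho=\tfrac{\ell+w}{2}$, both inside $(0,\ell)$. Minimising $\mathcal A(\Omega)=\mathcal H^3(\partial^*\Omega\cap\mathring W)-\int_\Omega h\,d\mathcal H^4$ over Caccioppoli sets containing a collar of $\partial_-W$ and avoiding a collar of $\partial_+W$ produces (total dimension being $4\le 7$, so regularity is not an issue) a smooth stable $\mu$-bubble $V^3\subset\mathring W$ with prescribed mean curvature $h$, separating the two ends. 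A Mayer--Vietoris argument (using that $V$ is cobordant to $\partial_-W$ inside $W$) shows that $[\mathbb{S}^2\times\{\ast\}]$ lies in the image of $H_2(V)\to H_2(\mathbb{S}^2\times\mathbb{S}^2)$. The stability inequality for $V$, rearranged via the Gauss equation, the trace inequality $|A|^2\ge\tfrac13 H^2$ for the $3$-dimensional bubble, the bound $\scal\ge 4$, and the identity $1+\tan^2=\sec^2$ built into the choice of $h$ (with the coefficient $\tfrac{3\pi}{2w}$ being exactly the extremal one), yields that the first eigenvalue of $-\Delta_V+\tfrac12\scal_V$ on $V$ is at least $\tfrac12\big(4-\tfrac{3\pi^2}{w^2}\big)$; equivalently $V$ carries the generalised scalar-curvature datum $\sigma_w:=4-\tfrac{3\pi^2}{w^2}$, which is positive precisely because $w>\tfrac{\sqrt3\,\pi}{2}$.

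Now minimise area inside $V^3$ in the class of $\mathbb{S}^2\times\{\ast\}$. This class is primitive, so the minimiser is a single smooth embedded surface, and feeding the eigenfunction of $-\Delta_V+\tfrac12\scal_V$ into the second-variation computation, following Bray--Brendle--Neves and Zhu, together with Gauss--Bonnet, forces it to be a $2$-sphere $\Sigma$ with $\tfrac{\sigma_w}{2}\aire_g(\Sigma)\le 2\pi\chi(\Sigma)=4\pi$, i.e. $\aire_g(\Sigma)\le\tfrac{8\pi}{\sigma_w}=\tfrac{8\pi w^2}{4w^2-3\pi^2}$. Since $\Sigma\subset V\subset W\subset\mathbb{S}^2\times\mathbb{S}^2$ belongs to $\mathcal S_\ell$, we get $\sys_{2,\ell}(g)\le\tfrac{8\pi w^2}{4w^2-3\pi^2}$ for every $w<s$; as $w\mapsto\tfrac{8\pi w^2}{4w^2-3\pi^2}$ is decreasing on $(\tfrac{\sqrt3\,\pi}{2},\infty)$, letting $w\uparrow s$ gives the claimed bound on $\sys_{2,\ell}(g)$, and the spheres $\Sigma$ produced along $w\uparrow s$ have uniformly bounded area and genus $0$, hence subconverge to an embedded minimal $2$-sphere whose area is at most $\tfrac{8\pi s^2}{4s^2-3\pi^2}$, giving the final clause.

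The delicate point — and the place where the borrowed Gromov--Zhu technology really does its work — is twofold: first, carrying the homology class $[\mathbb{S}^2\times\{\ast\}]$ alive through the two topological operations, namely excising the tubular neighbourhoods of $S_1,S_2$ and then passing to the $\mu$-bubble $V$, so that the sphere finally produced is genuinely in $\mathcal S_\ell$ and not in some other class; and second, pinning down the sharp constant $3\pi^2$, which forces the $\tan$-profile of $h$ to be the extremal one and the Gauss equation together with the $3$-dimensional trace inequality to be used with no slack — any loss there would only worsen the constant in the hypothesis $s>\tfrac{\sqrt3\,\pi}{2}$ and in the conclusion.
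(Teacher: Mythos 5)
Your architecture is the same as the paper's: excise tubular neighbourhoods of two far-apart surfaces in $\mathcal{S}_\ell$ to obtain a band, run Gromov's $\mu$-bubble width argument (which you unpack with the correct $\tan$-profile and the correct constants $\tfrac{3\pi}{2w}$, $|A|^2\geq\tfrac13 H^2$, $\tan^2-\sec^2=-1$) to get a separating hypersurface $V$ carrying the residual curvature $4-\tfrac{3\pi^2}{w^2}$ in the spectral sense, and then run the Bray--Brendle--Neves/Zhu weighted minimization inside $V$ to extract the small sphere; the limit $w\uparrow s$ is handled correctly. The paper does exactly this, quoting Gromov's and Zhu's theorems as black boxes where you sketch their proofs.

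The genuine gap is topological. First, your claim that $W$ is diffeomorphic to $\mathbb{S}^2\times\mathbb{S}^1\times[0,1]$ with $\partial_\pm W\cong\mathbb{S}^2\times\mathbb{S}^1$ is false in general: the surfaces $S_1,S_2$ realizing the stretch are only required to be \emph{homologous} to $\mathbb{S}^2\times\{\ast\}$, so they may have arbitrary genus, in which case $\partial S_i^\eps\cong S_i\times\mathbb{S}^1$ is not $\mathbb{S}^2\times\mathbb{S}^1$ and $W$ is not a product. Second, and more seriously, the step you compress into ``a Mayer--Vietoris argument shows $[\mathbb{S}^2\times\{\ast\}]$ lies in the image of $H_2(V)$'' together with the unargued final assertion that the minimizing sphere ``belongs to $\mathcal{S}_\ell$'' is where the real work lies. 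One must: (i) show every $S\in\mathcal{S}_\ell$ has trivial normal bundle (vanishing self-intersection plus the Euler-class obstruction, as you note in passing); (ii) compute $H^1(\tilde M)\cong\mathbb{Z}$ by Mayer--Vietoris and build a period map $\tilde M\to\mathbb{S}^1$, which combined with the projection to the first factor gives a non-zero degree map from any connected separating hypersurface to $\mathbb{S}^2\times\mathbb{S}^1$ --- after first extracting a \emph{connected} separating component from the possibly disconnected $\mu$-bubble; and (iii) observe that any $2$-cycle in $\tilde M$ has zero intersection with $[S_1]$, hence maps to a multiple of $[\mathbb{S}^2\times\{\ast\}]$ in $H_2(\mathbb{S}^2\times\mathbb{S}^2)$, so that the sphere produced by the Zhu-type minimization is genuinely in $\mathcal{S}_\ell$ and not, say, in $[\mathbb{S}^2\times\{\ast\}]+k[\{\ast\}\times\mathbb{S}^2]$; without this, the minimizer in your homology class could a priori decompose into several spheres none of which individually represents the right class. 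This is exactly the content of the paper's topological section (its Propositions on the degree map and on connected separating components); your proof needs that material and does not supply it.
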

\begin{rem}
  This estimate is asymptotically sharp when $s$ goes to $+\infty$, as
  the example of the product fo two round spheres of radi
  $\frac{1}{\sqrt{k}}$ and $\frac{1}{\sqrt{2-k}}$ for $k\in[1,2)$
  shows. It remains unclear to the author wether this estimate is optimal or not. What can be shown from
  tracking down the equality case in the proofs of Theorems
  \ref{thm-zhu} and \ref{thm-gro} is that if the inequality in Theorem
  \ref{thm-main} is an equality then M contains an hypersurface
  $\Sigma$ such that:
  \begin{itemize}
  \item There exists a map $\Sigma\to\mathbb{S}^2\times\mathbb{S}^1$
    with non-vanishing degre.
  \item The universal cover of $\Sigma$ is isometric to $\mathbb{S}^2\times\mathbb{R}$ with the product metric such that
    the $\mathbb{S}^2$ factor is round and has area exactly $\frac{8\pi s^2}{4s^2-3\pi^2}$.
  \end{itemize}
  Products of two round $\mathbb{S}^2$ which satisfy the properties
  above have left stretch smaller than $s$. The investigation of
  doubly warped product metrics of the form $\psi^2(r)g_{\mathbb{S}^2}+\ph^2(r)d\theta^2+ dr^2$
  on $\mathbb{S}^2\times \mathbb{S}^2$ was inconclusive too. 
\end{rem}
\begin{rem}
  Of course, if one denotes by $\mathcal{S}_r$ the set of embedded
  spheres homologous to $\{\ast\}\times\mathbb{S}^2$, we can define
  right stretch and systole as
  $\str_{r}(g)=\sup_{S_1,S_2\in\mathcal{S}_r}d_g\left(S_1,S_2\right)$
  and $\sys_{2,r}(g)=\inf_{S\in\mathcal{S}_r}\aire_g(S)$ and get the
  same inequality between the right systole and the right stretch.
\end{rem}

It is currently unknown wether the $2$-systole is
bounded from above on the set
$\mathcal{M}_{\mathbb{S}^2\times\mathbb{S}^2,\scal\geq 4}$ of metrics on
$\mathbb{S}^2\times\mathbb{S}^2$ with scalar curvature greater than
$4$. Our result says that if there is no upper bound, there
must be a sequence of metrics in $\mathcal{M}_{\mathbb{S}^2\times\mathbb{S}^2,\scal\geq 4}$
whose $2$-systoles goes to infinity while their left and right
stretches stay below $\tfrac{\sqrt{3}}{2}\pi$.

The rest of the paper is organized as follows: we first review some
previously known results by Gromov and Zhu on manifolds with positive scalar curvature
which will be used to prove Theorem \ref{thm-main}, we then prove an
elementary topological fact and we finally give the proof of the main theorem.

\subsubsection*{Acknowledgements}

The author would like to thank G. Besson, S. Maillot and S. Sabourau for helpful
discussions. He also thanks Jintian Zhu and the anonymous referees for pointing an inaccuracies in a previous
version of this work. The author
is supported by the grant ANR-17-CE40-0034 of the French National
Research Agency ANR (Project CCEM). 

\section{Positive scalar curvature inequalities by Gromov and Zhu}
\label{sec:results-gromov-zhu}

We will need the following result by Jintian Zhu, which was already
alluded to in the introduction.

\begin{thm}[\cite{zhu2020rigidity}, Theorem 1.2]\label{thm-zhu}
  Let $(M^n,g)$ be a closed manifold of dimension at most seven such
  that:
  \begin{itemize}
  \item $\scal_g\geq 2$;
  \item there exists a map $F:M^n\to\mathbb{S}^2\times\mathbb{T}^{n-2}$
    with non-vanishing degree.
  \end{itemize}
  Then $\sys_2(g)\leq 4\pi$. More precisely one can find an
  embedded 2-sphere $S$ such that
  $F_*([S])=[\mathbb{S}^2\times\{\ast\}]$ and whose area is less than $4\pi$.
\end{thm}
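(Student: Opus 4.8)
The plan is to prove Theorem \ref{thm-zhu} by an iterated ``warped minimal slicing'' in the spirit of Schoen--Yau and Gromov, reducing the statement in $n-2$ steps to a two-dimensional Gauss--Bonnet computation. First the topology: let $\pi_1,\pi_2$ be the two projections of $\mathbb{S}^2\times\mathbb{T}^{n-2}$, let $\omega\in H^2(\mathbb{S}^2;\mathbb{Z})$ be the fundamental class and $d\theta_1,\dots,d\theta_{n-2}\in H^1(\mathbb{T}^{n-2};\mathbb{Z})$ the standard generators, and set $\alpha_i=F^*\pi_2^*d\theta_i\in H^1(M;\mathbb{Z})$. Since $\deg F\neq 0$ we have $\langle F^*\pi_1^*\omega\cup\alpha_1\cup\cdots\cup\alpha_{n-2},[M]\rangle=\deg F\neq 0$; in particular each $\alpha_i$ is non-torsion and $PD(\alpha_{n-2})\in H_{n-1}(M)$ admits a smooth embedded two-sided representative. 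The goal is to realize a good representative of $PD(\alpha_1\cup\cdots\cup\alpha_{n-2})\in H_2(M)$ by successively intersecting minimal hypersurfaces dual to the $\alpha_i$, while propagating exactly the right amount of curvature information.

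\textbf{The inductive step.} Call a triple $(X^k,g,u)$ \emph{admissible} if $2\le k\le n$, $u>0$ is smooth, $\scal_g-2u^{-1}\Delta_g u\ge 2$ on $X$ (equivalently, the warped metric $g+u^2\,d\theta^2$ on $X\times\mathbb{S}^1$ has scalar curvature $\ge 2$), and there is a map $\Phi:X^k\to\mathbb{S}^2\times\mathbb{T}^{k-2}$ of non-vanishing degree. By hypothesis $(M^n,g,1)$ is admissible with $\Phi=F$. I claim that an admissible $(X^k,g,u)$ with $k\ge 3$ produces an admissible $(X'^{\,k-1},g',u')$ with $X'\subset X$ and $g'=g|_{X'}$. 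Indeed, minimize the $u$-weighted area $S\mapsto\int_S u\,d\mathcal H^{k-1}$ among hypersurfaces Poincar\'e dual to $\Phi^*\pi_2^*d\theta_{k-2}$; since $k-1\le 6$, a minimizer $X'$ is a smooth embedded two-sided hypersurface. Its $u$-weighted second-variation (Jacobi) operator is non-negative, hence has a positive first eigenfunction $\varphi$ with eigenvalue $\lambda_1\ge 0$; setting $u'=(u|_{X'})\,\varphi$ and combining the weighted stability identity with the Gauss equation one gets
\[
\scal_{g'}-2(u')^{-1}\Delta_{g'}u'\ \ge\ (\scal_g-2u^{-1}\Delta_g u)|_{X'}\ \ge\ 2 ,
\]
so no constant is lost. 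Moreover $[X']=PD(\Phi^*\pi_2^*d\theta_{k-2})$ forces the composition of $\Phi|_{X'}$ with the map $\mathbb{S}^2\times\mathbb{T}^{k-2}\to\mathbb{S}^2\times\mathbb{T}^{k-3}$ forgetting the last circle to have degree $\pm\deg\Phi\neq 0$. Thus $(X'^{\,k-1},g',u')$ is admissible, and iterating $n-2$ times from $(M^n,g,1)$ produces an admissible surface $(\Sigma^2,g_\Sigma,u)$ together with a degree-$(\pm\deg F)$ map $\Sigma^2\to\mathbb{S}^2$.

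\textbf{The two-dimensional endpoint.} On the closed surface $\Sigma$ one has $\scal_{g_\Sigma}=2K$, so admissibility reads $K-u^{-1}\Delta u\ge 1$. Integrating over a connected component $\Sigma_i$ and using $\int_{\Sigma_i}u^{-1}\Delta u=\int_{\Sigma_i}u^{-2}|\nabla u|^2\ge 0$ together with Gauss--Bonnet $\int_{\Sigma_i}K=2\pi\chi(\Sigma_i)$,
\[
\aire(\Sigma_i)\ \le\ 2\pi\chi(\Sigma_i)-\int_{\Sigma_i}u^{-2}|\nabla u|^2\ \le\ 2\pi\chi(\Sigma_i) .
\]
As a minimizer in a non-trivial homology class, each $\Sigma_i$ has positive area, so $\chi(\Sigma_i)>0$, i.e.\ $\Sigma_i\cong\mathbb{S}^2$ and $\aire(\Sigma_i)\le 4\pi$. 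Some component $\Sigma_0$ has non-zero degree over $\mathbb{S}^2$; since $H^1(\Sigma_0)=0$, all $\alpha_i$ restrict trivially to it, hence $F_*[\Sigma_0]$ is a non-zero multiple of $[\mathbb{S}^2\times\{\ast\}]$. Therefore $S=\Sigma_0$ is an embedded $2$-sphere with $\aire_g(S)\le 4\pi$, which in particular gives $\sys_2(g)\le 4\pi$; pinning $F_*[S]$ to $[\mathbb{S}^2\times\{\ast\}]$ on the nose needs only slightly more care about which integral $2$-cycle is minimized at the first step. Tracking the equality case, $\aire_g(S)=4\pi$ forces $\Sigma_0$ to be totally geodesic and isometric to the round sphere of curvature $1$, with $\scal_g\equiv 2$ on a flat product neighbourhood (the bound $4\pi$ being attained by $\mathbb{S}^2\times\mathbb{S}^1$ with a product metric), consistently with the Remark after Theorem \ref{thm-main}.

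\textbf{Main obstacle.} The crux is the inductive step: one must set up the $u$-weighted minimal hypersurface and its second-variation identity so that the constant $2$ in $\scal_g-2u^{-1}\Delta_g u\ge 2$ is reproduced \emph{exactly}, since $4\pi$ is the area of the unit round $2$-sphere, which is attained, so nothing can be wasted. This dictates the precise choice of weight---the first eigenfunction of the weighted Jacobi operator, in the right power---and a careful accounting of the $|A|^2$ and $\lambda_1$ terms arising from the Gauss equation; it is also the only place where $n\le 7$ enters, through interior regularity of the successive area minimizers. A secondary, purely topological, point is to carry through the slicing not merely a degree-non-zero map but enough homological data to identify the class of the terminal sphere.
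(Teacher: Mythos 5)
This statement is quoted from Zhu's paper and is not proved here; the paper only records that Zhu's argument proceeds by the second variation formula and the repeated Fischer-Colbrie--Schoen / Gromov--Lawson symmetrization reducing to $\mathbb{T}^{n-2}$-invariant metrics on $\Sigma^2\times\mathbb{T}^{n-2}$, which is exactly the iterated warped ($u$-weighted) minimal slicing you describe, ending in the Gauss--Bonnet step. Your outline is therefore essentially the same approach as the cited proof and is correct in structure, with the acknowledged details (the exact weighted stability computation preserving the constant $2$, and pinning down $F_*[S]$) being precisely the points Zhu's paper carries out in full.
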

The proof uses the second variation formula of the area functional for stable minimal hypersurfaces and a
repeated symmetrization construction to reduce the problem to
$\mathbb{T}^{n-2}$-invariant metrics on
$\Sigma^2\times\mathbb{T}^{n-2}$. This method of proof was pioneered
by Fischer-Colbrie and Schoen in dimension 3 and used in 
dimensions at most seven by Gromov and Lawson in
\cite{gromov1983positive}. Zhu also studies the equality case: under
the hypothesis of the theorem, if there is an embedded $2$-sphere $S$
such that $F_* ([S]) = [\mathbb{S}^2 × \{*\}]$ and whose area is $4\pi$, then the universal cover of $M$ is a product of the round $2$-sphere of area $4\pi$ and a euclidean space.

\begin{rem}\label{rem-zhu}
  The proof in \cite{zhu2020rigidity} shows that if one starts with an
  $\mathbb{S}^1$-invariant metric on $M=M'\times\mathbb{S}^1$
  satisfying the hypothesis of Theorem \ref{thm-zhu}, then one can
  find a $2$-sphere in $M'$ whose area is at most $4\pi$.
\end{rem}

In the Spring of 2019, the author was lucky enough to attend the
lectures ``Old, New and Unknown around Scalar Curvature'' given by
Gromov at IHES. There, the author learned about the following result by
Gromov which is also proved using the Fischer-Colbrie--Schoen
symmetrization process:
\begin{thm}[\cite{gromov2018metric}, p. 2]
  Let $2\leq n\leq 7$, $M=[-1,1]\times \mathbb{T}^{n-1}$ and
  $\partial_{\pm}M=\{\pm 1\}\times \mathbb{T}^{n-1}$. 
  Let $g$ be a Riemannian metric on $M$
  such that $\scal_g\geq n(n-1)$. Then
  \[d_g\big(\partial_-M,\partial_+M\big)\leq\frac{2\pi}{n}. \]
\end{thm}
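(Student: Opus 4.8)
I would prove this by contradiction, manufacturing a forbidden hypersurface by minimizing a \emph{warped $\mu$-bubble} functional; this is the method of \cite{gromov2018metric}, in the same (Fischer-Colbrie--Schoen) lineage as Theorem~\ref{thm-zhu}. Assume $\ell:=d_g(\partial_-M,\partial_+M)>\tfrac{2\pi}{n}$, fix $\ell'$ with $\tfrac{2\pi}{n}<\ell'<\ell$, and set $\rho:=d_g(\cdot,\partial_-M)$, which (after a harmless mollification) we take smooth with $|\nabla\rho|\le 1$; so $\rho\equiv 0$ on $\partial_-M$ and $\rho\ge\ell$ on $\partial_+M$. The first ingredient is a warping profile: a smooth $h_0$ on $(0,\ell')$ with $h_0(t)\to+\infty$ as $t\to 0^+$, $h_0(t)\to-\infty$ as $t\to\ell'$, and $|h_0'|<\tfrac{n}{2(n-1)}h_0^2+\tfrac{n(n-1)}{2}$ throughout. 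Such an $h_0$ exists precisely because the extremal equation $h_0'=-\tfrac{n}{2(n-1)}\big(h_0^2+(n-1)^2\big)$ has solution $h_0(t)=(n-1)\tan\!\big(\tfrac n2(t_0-t)\big)$, which sweeps all of $\mathbb R$ over a $t$-interval of length exactly $\tfrac{2\pi}{n}<\ell'$, so a time-dilated copy on $(0,\ell')$ is a strict subsolution. Centering and truncating, $h:=h_0\big(\rho-\tfrac{\ell-\ell'}{2}\big)$ extends to a function on $M$ equal to $+\infty$ near $\partial_-M$ and to $-\infty$ near $\partial_+M$ (this uses $\ell'<\ell$).

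I then minimize $\mathcal A(\Omega)=\mathcal H^{n-1}(\partial^*\Omega)-\int_M(\chi_\Omega-\chi_{\Omega_0})h\,d\vol$ over Caccioppoli sets $\Omega$ agreeing with a fixed $\Omega_0$ (which contains a collar of $\partial_-M$) outside a compact subset of $\mathrm{int}(M)$; the infinities of $h$ serve as barriers confining every minimizer to $\mathrm{int}(M)$. Because $n\le 7$, geometric measure theory yields a minimizer whose boundary $\Sigma$ is a smooth closed hypersurface inside $\mathrm{int}(M)$. Since $\Sigma$ separates $\partial_-M$ from $\partial_+M$ and $H_{n-1}(M)\cong\mathbb Z$ is generated by the torus slice, $\Sigma$ represents $\pm[\{\ast\}\times\mathbb T^{n-1}]$, so some component $\Sigma_0\subseteq\Sigma$ admits a map of nonzero degree to $\mathbb T^{n-1}$. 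The first variation gives $H_\Sigma=h|_\Sigma$, and stability gives, for all $\varphi\in C^\infty(\Sigma)$,
\[\int_\Sigma\Big(|\nabla\varphi|^2-\big(|A|^2+\ric_g(\nu,\nu)\big)\varphi^2\Big)\ \ge\ -\int_\Sigma|\partial_\nu h|\,\varphi^2 .\]

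Next comes the Schoen--Yau rewriting: using the Gauss equation $\ric_g(\nu,\nu)=\tfrac12(\scal_g-\scal_\Sigma+H^2-|A|^2)$, the pinching $|A|^2\ge\tfrac{H^2}{n-1}$ with $H=h$, then $\scal_g\ge n(n-1)$ and $|\partial_\nu h|\le|h_0'(\rho)|$, the stability inequality becomes
\[\int_\Sigma\Big(|\nabla\varphi|^2+\tfrac12\scal_\Sigma\,\varphi^2\Big)\ \ge\ \int_\Sigma\Big(\tfrac{n(n-1)}{2}+\tfrac{n}{2(n-1)}h_0^2-|h_0'|\Big)\varphi^2,\]
whose right-hand integrand is \emph{strictly positive} by the defining property of $h_0$. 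Restricting the test functions to $\Sigma_0$, this says $\Sigma_0$ carries a metric with $\lambda_1\big({-}\Delta_{\Sigma_0}+\tfrac12\scal_{\Sigma_0}\big)>0$. That contradicts the Gromov--Lawson--Schoen--Yau torus theorem: a closed manifold of dimension $\le 7$ that admits a nonzero-degree map to a torus of the same dimension carries no such metric (for $\dim\ge 3$, $\lambda_1(-\Delta+\tfrac12\scal)>0$ forces the Yamabe invariant to be positive --- the conformal Laplacian has $(-\Delta)$-coefficient $>2$ --- hence a positive scalar curvature metric; the cases $n=3$ and $n=2$ are elementary, via Gauss--Bonnet and via $\lambda_1(-\Delta_{\mathbb S^1})=0$). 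Therefore $\ell\le\tfrac{2\pi}{n}$.

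The step I expect to be the real obstacle is the construction and regularity of the $\mu$-bubble: verifying that the blow-up of $h$ along $\partial M$ genuinely prevents a minimizing sequence from escaping to the boundary, and that the minimizing set has a smooth embedded boundary --- this is the one place the dimension bound $n\le 7$ is used, exactly as in \cite{zhu2020rigidity} and \cite{gromov1983positive}. A secondary matter is constant-tracking: one must see that the extremal ODE blows up over an interval of length precisely $\tfrac{2\pi}{n}$, which is what makes the estimate sharp.
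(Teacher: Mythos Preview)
The paper does not give its own proof of this statement; it is quoted from \cite{gromov2018metric}, and only the proof strategies are summarized. The paper in fact distinguishes two approaches: the original one in \cite{gromov2018metric}, which (according to the paper) uses the second variation of the $(n-1)$-dimensional area and the Fischer-Colbrie--Schoen symmetrization, and a newer one from Gromov's 2019 lectures and \cite{gromov2019four} based on the twisted $\mu$-area functional---the method that yields Theorem~\ref{thm-gro}.

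Your argument is precisely this second, $\mu$-bubble approach, and it is correct: the profile $h_0$ blows up over an interval of length exactly $\tfrac{2\pi}{n}$, the barrier/regularity package for $n\le 7$ is standard, the stability/Gauss-equation rewriting is right, and the final torus obstruction goes through (your Yamabe remark is valid because the Rayleigh quotient of $-a_m\Delta+\scal$ dominates that of $-2\Delta+\scal$ once $a_m>2$). The only inaccuracy is attributional: you label this ``the method of \cite{gromov2018metric}'', whereas the paper presents the $\mu$-bubble argument as the \emph{new} proof; the original \cite{gromov2018metric} argument would instead minimize plain area to obtain a stable minimal hypersurface, pass to a warped product dropping one torus factor, and iterate down to a surface where Gauss--Bonnet finishes. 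What your route buys is that a single minimization already produces the separating $\Sigma$ together with the positive operator on it, exactly the mechanism the paper then exploits via Theorem~\ref{thm-gro}.
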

The equality case is also studied. If $d_g\big(\partial_-M,\partial_+M\big)=\frac{2\pi}{n}$, then $M$ is isometric to
$[-\tfrac{\pi}{n},\tfrac{\pi}{n}]\times\mathbb{T}^{n-1}$
with the product metric $dt^2 + \cos^{\tfrac{4}{n}}(\frac{nt}{2})dx^2$
where $dx^2$ is a flat metric on $\mathbb{T}^{n-1}$.

In his Spring 2019 lectures, Gromov gave a new proof of the previous
theorem. Instead of using the second variation of the
$(n-1)$-dimensional volume as in \cite{gromov2018metric}, a twisted
functional is considered. Given a density $\mu:M^n\to\mathbb{R}$, the
$\mu$-area functional maps an open set $U\subset M$ to
$\mathcal{V}_{n-1}(\partial U)-\int_U\mu$, where $\mathcal{V}_{n-1}$
denotes the $(n-1)$-dimensional volume. Using a well chosen
density $\mu$, Gromov proved the following theorem, which is a
generalisation of the theorem above.
\begin{thm}[\cite{gromov2019four}, Section 5.3]\label{thm-gro}
  Let $(M^n,g)$ $(n\leq 7)$ be a compact $n$-manifold with two boundary components $\partial_-M$
  and $\partial_+M$. Assume that
  \[\scal_g\geq \frac{4(n-1)\pi^2}{n\, d_g(\partial_-M,
      \partial_+M)^2}+\delta\]
  for some $\delta>0$. Then there exists
  \begin{itemize}
  \item an hypersurface $\Sigma$ which separates
    $\partial_-M$ and $\partial_+M$,
  \item a positive function
    $u:\Sigma\to\mathbb{R}$,
  \end{itemize}
  such that the metric
  $h=g|_{\Sigma}+u^2dt^2$ on $\Sigma\times\mathbb{R}$ has $\scal_h\geq \delta$.
\end{thm}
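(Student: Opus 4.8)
The strategy is Gromov's \emph{warped $\mu$-bubble} argument from \cite{gromov2019four}, in which the mean-curvature functional is replaced by a density-weighted one. Put $d:=d_g(\partial_-M,\partial_+M)>0$, so the hypothesis reads $\scal_g\geq\frac{4(n-1)\pi^2}{nd^2}+\delta$. First I would fix a small $\epsilon>0$ and a smooth function $\rho\colon M\to(-\tfrac d2,\tfrac d2)$ with $|\nabla\rho|_g\leq1$ everywhere, equal to $-\tfrac d2+\epsilon$ on a neighbourhood of $\partial_-M$ and to $\tfrac d2-\epsilon$ on a neighbourhood of $\partial_+M$ (mollify a truncation of $x\mapsto d_g(x,\partial_-M)-\tfrac d2$; mollification does not increase the Lipschitz constant). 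Then I would introduce the model profile
\[w(t):=-\frac{2(n-1)\pi}{nd}\tan\!\Big(\frac{\pi t}{d}\Big),\qquad t\in\Big(-\tfrac d2,\tfrac d2\Big),\]
which is smooth, strictly decreasing, blows up to $+\infty$ as $t\to(-\tfrac d2)^+$ and to $-\infty$ as $t\to(\tfrac d2)^-$, and which satisfies the Riccati-type identity
\[-2w'(t)-\frac{n}{n-1}\,w(t)^2=\frac{4(n-1)\pi^2}{nd^2}\qquad\text{for all }t\in(-\tfrac d2,\tfrac d2);\]
this elementary computation is the heart of the matter, and it is the reason the constant $\frac{4(n-1)\pi^2}{nd^2}$ occurs in the hypothesis.

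Next I would minimise the $\mu$-area functional
\[\mathcal{A}(\Omega):=\mathcal{H}^{n-1}(\partial^*\Omega)-\int_\Omega (w\circ\rho)\,d\mathcal{H}^n\]
over Caccioppoli sets $\Omega\subset M$ coinciding with a fixed reference set $\Omega_0$ — a smooth neighbourhood of $\partial_-M$ disjoint from $\partial_+M$ — outside a fixed compact subset of $\mathrm{int}\,M$. For $\epsilon$ small, $w\circ\rho$ is large and positive on the collar of $\partial_-M$ and large and negative on the collar of $\partial_+M$, so the term $-\int_\Omega(w\circ\rho)$ penalises leaving out the first collar and including the second; hence a minimiser $\Omega$ exists and $\Sigma:=\partial^*\Omega$ is a compact hypersurface separating $\partial_-M$ from $\partial_+M$ and contained in $\mathrm{int}\,M$. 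Since $n\leq7$, $\Sigma$ is smooth and embedded. I expect this step — the existence, the interior regularity, and the barrier estimate confining $\Sigma$ to the bulk region and forcing it to separate — to be the main obstacle; it is exactly where $n\leq7$ is used, and it is the part of \cite{gromov2019four} that requires the machinery of geometric measure theory (compare also the expositions of the $\mu$-bubble technique by Chodosh--Li and by Zhu).

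Then I would exploit minimality of $\Sigma$. The first variation of $\mathcal{A}$ forces the scalar mean curvature (with respect to the outward unit normal $\nu$) to be $H=(w\circ\rho)|_\Sigma$; the non-negativity of the second variation yields, for every $\psi\in C^\infty(\Sigma)$,
\[\int_\Sigma|\nabla\psi|^2\;\geq\;\int_\Sigma\Big(|A|^2+\ric_g(\nu,\nu)+\partial_\nu(w\circ\rho)\Big)\psi^2 .\]
Inserting the traced Gauss equation $2\ric_g(\nu,\nu)=\scal_g-\scal_\Sigma+H^2-|A|^2$, the pinching $|A|^2\geq\tfrac1{n-1}H^2$, the pointwise bound $\partial_\nu(w\circ\rho)=w'(\rho)\langle\nabla\rho,\nu\rangle\geq w'(\rho)$ (valid since $w'<0$ and $|\nabla\rho|\leq1$), then the Riccati identity, and finally $\scal_g\geq\frac{4(n-1)\pi^2}{nd^2}+\delta$, one finds that the coefficient of $\psi^2$ on the right-hand side is $\geq\frac\delta2-\frac12\scal_\Sigma$. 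Therefore
\[\int_\Sigma|\nabla\psi|^2+\frac12\int_\Sigma\scal_\Sigma\,\psi^2\;\geq\;\frac\delta2\int_\Sigma\psi^2\qquad\text{for all }\psi\in C^\infty(\Sigma),\]
that is, the Schrödinger operator $-2\Delta_\Sigma+\scal_\Sigma$ on the closed manifold $\Sigma$ has first eigenvalue $\lambda_1\geq\delta$.

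Finally I would close the argument with the classical eigenfunction trick (Kazdan--Warner, Gromov--Lawson). Let $u>0$ be a first eigenfunction, so $-2\Delta_\Sigma u+\scal_\Sigma u=\lambda_1 u$ with $\lambda_1\geq\delta$. For the warped product $h=g|_\Sigma+u^2\,dt^2$ on $\Sigma\times\mathbb{R}$, the standard formula for a one-dimensional warped factor gives
\[\scal_h=\scal_\Sigma-\frac{2\Delta_\Sigma u}{u}=\frac1u\big(\scal_\Sigma u-2\Delta_\Sigma u\big)=\lambda_1\geq\delta,\]
so $\Sigma$ and $u$ are the hypersurface and positive function demanded by the theorem. (In fact $h$ then has constant scalar curvature $\lambda_1$, which is also the natural place to read off the rigidity statement alluded to in Remark on the equality case.)
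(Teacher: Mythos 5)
This statement is quoted from Gromov's \emph{Four Lectures} and the paper offers no proof of its own beyond naming the $\mu$-area functional with a well-chosen density; your reconstruction is precisely that argument, and the computations all check out (the profile $w$, the Riccati identity $-2w'-\tfrac{n}{n-1}w^2=\tfrac{4(n-1)\pi^2}{nd^2}$, the stability inequality combined with the traced Gauss equation and $|A|^2\ge H^2/(n-1)$, and the eigenfunction/warped-product step giving $\scal_h=\lambda_1\ge\delta$). The one step you defer --- existence, interior regularity for $n\le 7$, and the barrier estimate forcing the minimiser to separate the two boundary components --- is exactly the geometric-measure-theory input supplied in the cited sources, so deferring it is consistent with the level of detail of the paper itself.
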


\section{Topological preliminaries}
\label{sec:topol-prel}

Before proving the main theorem, we establish a topological
preliminary. Let $M=\mathbb{S}^2\times\mathbb{S}^2$.

The idea is that if we remove two disjoint spheres $S_1$ and $S_2$ in
$\mathcal{S}_\ell$ from $M$, we should be in a situation which is very
similar to removing $\mathbb{S}^2\times\{n\}$ and
$\mathbb{S}^2\times\{s\}$ from $\mathbb{S}^2\times\mathbb{S}^2$, where
$n$ and $s$ are the north and south
poles of the right factor. Thus $\tilde M=M\backslash (S_1\cup S_2)$
should look like
\[\mathbb{S}^2\times\mathbb{S}^2\backslash\left(\mathbb{S}^2\times\{n\}\cup
    \mathbb{S}^2\times\{s\}\right)=\mathbb{S}^2\times
  \left(\mathbb{S}^2\backslash\{n,s\}\right)\simeq \mathbb{S}^2\times
  \mathbb{S}^1\times (-1,1).\]

However, our surfaces $S_1$ and $S_2$ from $\mathcal{S}_\ell$ may have
higher genus and thus $\tilde M$ may be more complicated. 

However we are able to prove the following, which will be enough for our purpose:

\begin{prop}\label{prop-top}
  Let $S_1$ and $S_2$ be two disjoint surfaces in
  $\mathcal{S}_\ell$. Let $\Sigma\subset M$ be a connected hypersurface such
  that
  \begin{itemize}
  \item $\Sigma$ is disjoint from $S_1$ and $S_2$,
  \item $\Sigma$ separates $S_1$ and $S_2$.
  \end{itemize}
  Then there exists a non-zero degree map $\Sigma\to\mathbb{S}^2\times\mathbb{S}^1$.
\end{prop}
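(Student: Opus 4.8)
The plan is to build the degree-one map to $\mathbb{S}^2\times\mathbb{S}^1$ by combining a projection-type map on the $\mathbb{S}^2$ factor with a map to $\mathbb{S}^1$ coming from the separating property of $\Sigma$. Recall $M=\mathbb{S}^2\times\mathbb{S}^2$, and write $\pi_1,\pi_2$ for the two projections. The key homological facts I would use are: $[S_i]=[\mathbb{S}^2\times\{\ast\}]$ in $H_2(M;\mathbb{Z})$, so the restriction $\pi_1|_{S_i}\colon S_i\to\mathbb{S}^2$ has degree $1$ and $\pi_2|_{S_i}$ has degree $0$; and, dually, the class $[\mathbb{S}^2\times\{\ast\}]$ is Poincaré dual to the pullback under $\pi_2$ of the fundamental cohomology class of the right sphere, i.e. to $\pi_2^*\omega$ where $\omega\in H^2(\mathbb{S}^2)$ generates.

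First I would produce the $\mathbb{S}^1$-coordinate. Since $S_1$ and $S_2$ are homologous and $\Sigma$ separates them, $M\setminus\Sigma$ has (at least) two components, one containing $S_1$ and one containing $S_2$; more to the point, $\Sigma$ — suitably oriented — is null-homologous is \emph{not} what we want, rather I claim $[\Sigma]=0$ or $[\Sigma]=\pm[\mathbb{S}^2\times\{\ast\}]$ in $H_2(M)$ is the wrong bookkeeping; the right statement is about the class in $H_3$: cutting $M$ along the hypersurface $\Sigma$ and using that $\Sigma$ separates the two homologous surfaces, the Poincaré dual of $\Sigma$ in $H^1(M;\mathbb{Z})=0$ forces $\Sigma$ to be null-homologous, hence to bound a region $\Omega$ with $\partial\Omega=\Sigma$. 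Then I take the function that is the signed distance-type coordinate: more robustly, since $\Sigma$ separates $S_1$ from $S_2$ and $[S_1]=[S_2]$, I can find a smooth $f\colon M\to\mathbb{R}$ (or rather to $\mathbb{S}^1$) that is constant near each $S_i$ with $f(S_1)\neq f(S_2)$ and has $\Sigma$ as a regular level set; concretely, use $\Omega$ to define $f$ on $M$ with $df$ supported in a collar of $\Sigma$, giving a map $g\colon M\to\mathbb{S}^1$ sending $\Sigma$ to a point and with $g$ a submersion on a neighbourhood of $\Sigma$. Restricting $g$ to $\Sigma$ is not yet what I want; instead I restrict the \emph{ambient} pair: consider $\Phi=(\pi_1, g)\colon M\to\mathbb{S}^2\times\mathbb{S}^1$ and then restrict to $\Sigma$.

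The heart of the argument is then a degree computation for $\Phi|_\Sigma\colon\Sigma\to\mathbb{S}^2\times\mathbb{S}^1$. I would compute the degree by pulling back the generator of $H^3(\mathbb{S}^2\times\mathbb{S}^1;\mathbb{Z})$, namely $\omega\times[\mathbb{S}^1]^\vee$, i.e. $\pi_{\mathbb{S}^2}^*\omega\smile g$-generator. Its pullback to $\Sigma$ is $\big(\pi_1^*\omega\big)|_\Sigma \smile \big(g^*\eta\big)|_\Sigma$ where $\eta$ generates $H^1(\mathbb{S}^1)$. Now $g^*\eta\in H^1(M)=0$, so one must be careful: $g^*\eta$ is exact on $M$ but I evaluate the cup product on the \emph{closed} submanifold $\Sigma$, where $g^*\eta|_\Sigma$ need not vanish. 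Evaluating $\langle (\pi_1^*\omega \smile g^*\eta)|_\Sigma, [\Sigma]\rangle$ via Poincaré–Lefschetz duality in $M$: $\pi_1^*\omega$ is dual to $\{p\}\times\mathbb{S}^2$ (a right-factor sphere $R$), and $g^*\eta$ counts intersection with $\Sigma$, so the pairing equals the count (with sign) of points in $R\cap\Sigma\cap(\text{a fibre})$ — equivalently, it is the degree with which a right-factor circle $\{p\}\times C$ crosses $\Sigma$, which by the separation hypothesis is $\pm1$ because such a circle joins a point near $S_1$ to a point near $S_2$ and must cross $\Sigma$ an odd number of times algebraically. Packaging this: $\langle\Phi|_\Sigma^*(\text{top class}),[\Sigma]\rangle = \langle\omega,[\pi_1(R)]\rangle\cdot(\text{crossing number of }\{p\}\times\mathbb{S}^2\text{ with }\Sigma) = 1\cdot(\pm1)\neq 0$.

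The main obstacle, and the step needing the most care, is the middle one: making precise in what sense $\Sigma$ "separates $S_1$ from $S_2$" yields that $\Sigma$ represents a specific intersection-theoretic quantity, and handling the fact that $\Sigma$ need not be connected a priori (the proposition assumes connected, which helps) and that the $S_i$ may have higher genus so $M\setminus(S_1\cup S_2)$ is not simply $\mathbb{S}^2\times\mathbb{S}^1\times(-1,1)$. I would circumvent the genus issue entirely by never using the topology of $M\setminus(S_1\cup S_2)$ — only the homology class of the $S_i$ and of $\Sigma$ inside $M$, together with a single geometric crossing-number computation along a product circle $\{p\}\times\mathbb{S}^2$. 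The one genuinely delicate point is orienting $\Sigma$ and checking the crossing number is $\pm1$ rather than $0$: here I use that a right-factor sphere $\{p\}\times\mathbb{S}^2$ meets $S_1$ and $S_2$ each in an odd number of points (since $[S_i]\cdot[\{p\}\times\mathbb{S}^2]=1$), pick such a sphere meeting both transversally, take an arc on it from a point of $S_1$ to a point of $S_2$ lying in the respective components of $M\setminus\Sigma$, and conclude the arc meets $\Sigma$ algebraically once; summing over the sphere then gives total crossing number $\equiv [S_1]\cdot[R] \equiv 1 \pmod 2$ and in particular nonzero, which is all that is needed for the map $\Phi|_\Sigma$ to have nonzero degree.
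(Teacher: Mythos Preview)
Your construction has a fatal flaw in the $\mathbb{S}^1$-coordinate. You build $g:M\to\mathbb{S}^1$ on all of $M=\mathbb{S}^2\times\mathbb{S}^2$, with $\Sigma$ a regular level set of the lift $f$. But then $g|_\Sigma$ is \emph{constant}, so $\Phi|_\Sigma=(\pi_1,g)|_\Sigma$ lands in $\mathbb{S}^2\times\{\text{pt}\}$ and has degree~$0$. More generally, $M$ is simply connected, so \emph{every} map $M\to\mathbb{S}^1$ is null-homotopic; hence any $\Phi=(\pi_1,g)$ defined globally on $M$ is homotopic to a map with image in $\mathbb{S}^2\times\{\text{pt}\}$, and its restriction to any closed hypersurface has degree~$0$. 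The same obstruction shows up in your cohomological computation: you correctly note that $g^*\eta\in H^1(M)=0$, and this exactness persists after restriction to $\Sigma$ (restriction commutes with $d$), so $(\pi_1^*\omega\smile g^*\eta)|_\Sigma=0$ in $H^3(\Sigma)$. Your attempt to rescue this by an intersection/crossing count does not work either: since $[\Sigma]=0$ in $H_3(M)$ (indeed $H_3(M)=0$), the algebraic intersection of $\Sigma$ with any closed curve in $M$ is zero; the ``algebraically once'' crossing you find is for an \emph{arc}, not a cycle, and does not survive the passage to homology classes on $M$.

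The missing idea is precisely the one you decided to avoid: one must work in $\tilde M=M\setminus(S_1\cup S_2)$, where the $\mathbb{S}^1$-coordinate genuinely exists. The paper first shows each $S_i$ has trivial normal bundle (self-intersection~$0$), then uses Mayer--Vietoris for $M=S^\eps\cup\tilde M$ to compute $H^1(\tilde M;\mathbb{Z})\cong\mathbb{Z}$. A generating closed $1$-form on $\tilde M$ integrates (period map) to $F_1:\tilde M\to\mathbb{S}^1$ inducing an isomorphism on $H_1$, and one sets $F=(\pi_1,F_1):\tilde M\to\mathbb{S}^2\times\mathbb{S}^1$. Since any separating connected $\Sigma$ is homologous in $\tilde M$ to the boundary $\partial S_1^\eps$ of a small tube around $S_1$, it suffices to check $F_*[\partial S_1^\eps]\neq 0$ in $H_3(\mathbb{S}^2\times\mathbb{S}^1)$, which follows directly from the construction. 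So your instinct to use $\pi_1$ for the $\mathbb{S}^2$-factor is right, but the $\mathbb{S}^1$-factor must come from $H^1(\tilde M)$, not from $M$.
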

This map will be the restriction of a map $F:\tilde M=M\backslash
(S_1\cup S_2)\to \mathbb{S}^2\times\mathbb{S}^1$.

We will first show :
\begin{lem}\label{lem-norm}
  Any $S\in \mathcal{S}_\ell$ has a trivial normal bundle.
\end{lem}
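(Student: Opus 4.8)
The plan is to show that the self-intersection number of $S$ in $M = \mathbb{S}^2\times\mathbb{S}^2$ vanishes, since for an embedded surface in a $4$-manifold the Euler number of the normal bundle equals the homological self-intersection $[S]\cdot[S]$, and an oriented plane bundle over a surface is trivial precisely when its Euler number is zero.

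First I would recall the intersection form on $H_2(\mathbb{S}^2\times\mathbb{S}^2;\mathbb{Z})$. Writing $a = [\mathbb{S}^2\times\{\ast\}]$ and $b = [\{\ast\}\times\mathbb{S}^2]$ for the standard generators, the intersection form is hyperbolic: $a\cdot a = 0$, $b\cdot b = 0$, $a\cdot b = 1$. By definition every $S\in\mathcal{S}_\ell$ is homologous to $\mathbb{S}^2\times\{\ast\}$, so $[S] = a$ and hence $[S]\cdot[S] = a\cdot a = 0$.

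Next I would invoke the standard fact from $4$-manifold topology that for a closed oriented surface $S$ smoothly embedded in a closed oriented $4$-manifold $M$, the normal bundle $\nu$ of $S$ is an oriented rank-$2$ real vector bundle over $S$ whose Euler class evaluates on the fundamental class as $\langle e(\nu),[S]\rangle = [S]\cdot[S]$ (the normal Euler number equals the self-intersection). Combined with the previous step this gives $\langle e(\nu),[S]\rangle = 0$. Since oriented rank-$2$ real bundles over a surface are classified by their Euler number, a vanishing Euler number forces $\nu$ to be trivial. Here $S$ is oriented because it represents a nonzero integral homology class (or simply because it is a surface in the given homology class $a$, which is represented by $\mathbb{S}^2\times\{\ast\}$), and $M = \mathbb{S}^2\times\mathbb{S}^2$ is oriented, so the normal bundle is genuinely oriented and the classification applies.

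There is no serious obstacle here; the only point requiring a little care is making sure the orientation hypotheses are in place so that ``normal Euler number equals self-intersection'' and ``Euler number zero implies trivial'' are both legitimate, and noting that the surface $S$ need not be connected a priori — but since it is in the class of the connected surface $\mathbb{S}^2\times\{\ast\}$ and the argument is purely homological, connectedness plays no role in the proof of triviality of the normal bundle. If one prefers an argument avoiding characteristic classes, one can instead observe directly that a tubular neighbourhood of $S$ is an oriented disk bundle, and that $M\setminus S$ having the right homology (forced by $[S]\cdot[S]=0$ via a Mayer--Vietoris computation) lets the boundary circle bundle admit a section, i.e.\ the normal bundle is trivial; but the characteristic-class route is cleanest.
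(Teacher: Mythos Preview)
Your proof is correct and follows essentially the same route as the paper: show that the Euler class of the normal bundle vanishes because the self-intersection $[S]\cdot[S]$ is zero, then use that an oriented rank-$2$ bundle over a closed surface with vanishing Euler class is trivial. The only cosmetic difference is that you quote the intersection form on $H_2(\mathbb{S}^2\times\mathbb{S}^2)$ directly to get $a\cdot a=0$, whereas the paper obtains the same vanishing by observing that $\mathcal{S}_\ell$ contains two disjoint representatives (e.g.\ $\mathbb{S}^2\times\{p\}$ and $\mathbb{S}^2\times\{q\}$), which is of course exactly why $a\cdot a=0$ holds.
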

\begin{proof}
  Let $NS$ be the normal bundle to $S$ and let $e(NS)$ be the Euler
  class of $NS$. It follows from \cite{BottTu}[Theorem 11.17,
  p. 125] that the Euler class of a vector bundle over a compact manifold can be computed as
  the intersection number between the zero section and another
  transverse section times the fundamental class of the base. 

  Since $NS$ can be embedded in $M$ as a small tubular neighborhood of
  $N$, this intersection number can be computed in $M$. Since in
  $\mathcal{S}_\ell$ one can find two disjoint spheres, the
  intersection  number is 0 and $e(NS)=0$.

  Moreover the Euler class is the only obstruction for an oriented
  rank $k$ vector bundle over a a compact $k$-manifold to have a non
  vanishing section (see \cite{hatcher2003vector}, Proposition
  3.22). Hence $NS$ has a nowhere vanishing section. Since $NS$ is an
  orientable rank 2 vector bundle, it is trivial.
\end{proof}

For $\eps>0$, we denote by $S_i^\eps$ the tubular neighborhood around
$S_i$. We will choose $\eps$ small enough so that $S_1^\eps$ and
$S_2^\eps$ are disjoint regular neighborhoods.

\begin{lem}\label{lem-S2}
  The map $F_2:\tilde M\subset \mathbb{S}^2\times\mathbb{S}^2\to\mathbb{S}^2\times\{\ast\}$ is
  surjective in homology.
\end{lem}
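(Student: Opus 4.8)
The plan is to check surjectivity of $(F_2)_*$ degree by degree, the only interesting case being $H_2$. First note that $F_2$ is nothing but the restriction to $\tilde M$ of the retraction $p\colon\mathbb{S}^2\times\mathbb{S}^2\to\mathbb{S}^2\times\{\ast\}$, $(x,y)\mapsto(x,\ast)$; equivalently $F_2=p\circ\iota$, where $\iota\colon\tilde M\hookrightarrow M$ is the inclusion. Since $\tilde M$ is non-empty, $(F_2)_*$ is onto in degree $0$, and since $H_k(\mathbb{S}^2\times\{\ast\})=0$ for $k=1$ and for $k\geq 3$, there is nothing to prove in those degrees. So it remains to show that $(F_2)_*\colon H_2(\tilde M)\to H_2(\mathbb{S}^2\times\{\ast\})\cong\mathbb{Z}$ hits a generator.

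For this I would simply push $S_1$ off itself. By Lemma~\ref{lem-norm} the normal bundle of $S_1$ is trivial, so the regular neighbourhood $S_1^\eps$ is diffeomorphic to $S_1\times D^2$; recall also that $\eps$ was chosen small enough that $S_1^\eps$ is disjoint from $S_2$. Fix a non-zero $v\in D^2$ of small norm and set $S_1'=S_1\times\{v\}\subset S_1^\eps$, oriented via the identification with $S_1$. Then $S_1'$ is disjoint from both $S_1$ and $S_2$, hence $S_1'\subset\tilde M$, and the cylinder $S_1\times[0,1]\to M$, $(x,t)\mapsto(x,tv)$, exhibits the inclusion $S_1'\hookrightarrow M$ as homotopic to the inclusion $S_1\hookrightarrow M$. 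Consequently, if $[S_1']\in H_2(\tilde M)$ denotes the fundamental class of $S_1'$, we get $\iota_*[S_1']=[S_1]=[\mathbb{S}^2\times\{\ast\}]$ in $H_2(M)$.

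Applying $p_*$ and using that $p$ restricts to the identity on $\mathbb{S}^2\times\{\ast\}$, we obtain $(F_2)_*[S_1']=p_*[\mathbb{S}^2\times\{\ast\}]=[\mathbb{S}^2\times\{\ast\}]$, a generator of $H_2(\mathbb{S}^2\times\{\ast\})$. Hence $(F_2)_*$ is onto in degree $2$ as well, which finishes the proof. I do not expect a serious obstacle here: the single point that really requires an input is the existence of a copy of $S_1$ lying inside $\tilde M$ and still carrying the class $[\mathbb{S}^2\times\{\ast\}]$, and this is exactly what the triviality of the normal bundle buys us — if $S_1$ had non-zero self-intersection no such parallel copy could exist. (Alternatively one could run the long exact sequence of the pair $(M,\tilde M)$ and identify $H_2(M,\tilde M)$ via the Thom isomorphism using Lemma~\ref{lem-norm}, but the explicit push-off seems cleanest.)
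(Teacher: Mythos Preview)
Your proof is correct. Both you and the paper produce a $2$-cycle $S_1'\subset\tilde M$ that is homologous in $M$ to $S_1$ and hence hits a generator under $p_*$, but the constructions differ. The paper chooses a $3$-chain $C$ with $\partial C=S_1-S_2$, refines a triangulation so that $C$ decomposes as $C_1+\tilde C+C_2$ with $C_i$ supported in $S_i^\eps$ and $\tilde C$ in $\tilde M$, and then sets $S_1'=\partial C_1-S_1$. You instead invoke Lemma~\ref{lem-norm} directly and take $S_1'$ to be the parallel copy $S_1\times\{v\}$ in the product tubular neighbourhood. Your route is arguably cleaner and makes explicit use of the normal-bundle lemma just established; the paper's chain-level cut-and-paste avoids citing that lemma but is a bit less transparent (one has to convince oneself that $\partial C_1-S_1$ really lies in $\tilde M$ and is a cycle there). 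Either way the substance is the same: the class $[\mathbb{S}^2\times\{\ast\}]$ is already represented inside $\tilde M$.
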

\begin{proof}
  Consider a $3$-cycle $C$ in $M$ such that $\partial C= S_1-S_2$. By using
  an fine enough triangulation, we can decompose $C=C_1+\tilde C+C_2$,
  where $\tilde C$ is a $3$-cycle in $\tilde M$ and $C_i$ is a
  $3$-cycle in $S_i^\eps$. Now, $S'_1=\partial C_1-S_1$ is a $2$-cycle in
  $\tilde M$ homologous to $S_1$, hence $\pi(F_2)_*([S'_1])=[\mathbb{S}^2\times\{\ast\}]$.  
\end{proof}

Set $S=S_1\cup S_2$, $S^\eps=S_1^\eps\cup S_2^\eps$ and $\tilde
S^\eps=S^\eps\backslash S$.

\begin{lem}\label{lem-H1}
  $H^1(\tilde M,\mathbb{Z})=\mathbb{Z}$.
\end{lem}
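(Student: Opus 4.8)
The plan is to compute $H^1(\tilde M;\mathbb{Z})$ via a Mayer--Vietoris argument, using the decomposition $M = \tilde M \cup S^\eps$ (or rather slightly enlarged open versions thereof). First I would note that $S^\eps$ is an open neighborhood that deformation retracts onto $S = S_1 \cup S_2$, so $H^*(S^\eps) \cong H^*(S_1) \oplus H^*(S_2)$; since each $S_i$ is a closed orientable surface, $H^1(S^\eps;\mathbb{Z}) \cong \mathbb{Z}^{2g_1} \oplus \mathbb{Z}^{2g_2}$ where $g_i$ is the genus of $S_i$, and $H^0(S^\eps;\mathbb{Z}) \cong \mathbb{Z}^2$. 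The overlap $\tilde M \cap S^\eps$ is homotopy equivalent to $\tilde S^\eps$, the disjoint union of two copies of $S_i \times \mathbb{S}^1$ coming from the circle bundles over $S_1, S_2$, which are trivial by Lemma~\ref{lem-norm}. Finally $M = \mathbb{S}^2 \times \mathbb{S}^2$ has $H^0 = H^4 = \mathbb{Z}$, $H^2 = \mathbb{Z}^2$, and $H^1 = H^3 = 0$.

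Next I would write down the relevant portion of the Mayer--Vietoris sequence in cohomology:
\[
0 \to H^0(M) \to H^0(\tilde M) \oplus H^0(S^\eps) \to H^0(\tilde S^\eps) \to H^1(M) \to H^1(\tilde M) \oplus H^1(S^\eps) \to H^1(\tilde S^\eps) \to H^2(M) \to \cdots
\]
Since $\tilde M$ is connected (this should be checked, or else one works component by component) and $H^1(M) = 0$, the segment
\[
0 \to \mathbb{Z} \to \mathbb{Z} \oplus \mathbb{Z}^2 \to \mathbb{Z}^2 \to 0 \to H^1(\tilde M) \oplus H^1(S^\eps) \to H^1(\tilde S^\eps) \to H^2(M)
\]
shows the first four terms are consistent (the map $\mathbb{Z} \to \mathbb{Z}^3$ is injective with cokernel $\mathbb{Z}^2$, matching $H^0(\tilde S^\eps) = \mathbb{Z}^2$). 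The key computation is then the map $H^1(\tilde M) \oplus H^1(S^\eps) \to H^1(\tilde S^\eps)$ and its relation to $H^2(M) = \mathbb{Z}^2$. Here $H^1(\tilde S^\eps) = H^1(S_1 \times \mathbb{S}^1) \oplus H^1(S_2 \times \mathbb{S}^1) = (\mathbb{Z}^{2g_1} \oplus \mathbb{Z}) \oplus (\mathbb{Z}^{2g_2} \oplus \mathbb{Z})$; the restriction from $H^1(S^\eps)$ hits the $H^1(S_i)$ summands isomorphically, so the cokernel of $H^1(S^\eps) \to H^1(\tilde S^\eps)$ is $\mathbb{Z}^2$, generated by the two $\mathbb{S}^1$-fiber classes $d\theta_1, d\theta_2$. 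I expect that the connecting map $H^1(\tilde S^\eps) \to H^2(M)$ sends these two fiber classes to (plus or minus) the Poincaré dual of $[S_1] = [S_2] = [\mathbb{S}^2 \times \{\ast\}]$ — that is, to the same generator of $H^2(M) = \mathbb{Z}^2$, up to sign. So the image of the connecting map is a rank-one subgroup, its kernel contributes rank one, and tracing through exactness gives $H^1(\tilde M) = \mathbb{Z}$.

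The main obstacle will be pinning down the connecting homomorphism $\delta : H^1(\tilde S^\eps) \to H^2(M)$ precisely enough — in particular verifying that $\delta(d\theta_1)$ and $\delta(d\theta_2)$ are equal (up to sign) and nonzero, which is exactly where the hypothesis $[S_1] = [S_2]$ and the triviality of the normal bundles enter. The cleanest way is probably to identify $\delta(d\theta_i)$ with the Thom class / Poincaré dual of $S_i$ pushed into $M$: in the trivialized tubular neighborhood $S_i^\eps \cong S_i \times D^2$, the class $d\theta_i$ on $S_i \times \partial D^2$ is the restriction of the angular form, whose exterior derivative extends (after the usual bump-function correction) to a closed $2$-form supported near $S_i$ representing the Poincaré dual of $S_i$. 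Since $S_1$ and $S_2$ are homologous in $M$, these Poincaré duals agree in $H^2(M)$, and since $[S_1] \ne 0$ the class is nonzero; this gives $\operatorname{rank}(\operatorname{im}\delta) = 1$. One should also double-check connectedness of $\tilde M$ (it follows because $S_1, S_2$ have trivial normal bundle and codimension $2$, so removing them does not disconnect the simply connected $M$) and handle any torsion carefully, though since every group in sight is free abelian no torsion arises. Assembling the rank count then yields $H^1(\tilde M;\mathbb{Z}) = \mathbb{Z}$.
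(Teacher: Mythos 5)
Your proposal is correct and follows essentially the same route as the paper: a Mayer--Vietoris computation for $M = \tilde M \cup S^\eps$, using the triviality of the normal bundles (Lemma~\ref{lem-norm}) to identify $\tilde S^\eps \simeq S\times\mathbb{S}^1$, and the key observation that the connecting map sends both $\mathbb{S}^1$-fiber classes to the same nonzero class in $H^2(M)$ (the Poincar\'e dual of $[S_1]=[S_2]$), so its kernel contributes exactly one extra $\mathbb{Z}$ to $H^1(\tilde M)$. Your justification of the connecting homomorphism via the Thom class is actually more explicit than the paper's, which simply asserts the formula $(a,x,b,y)\mapsto(x-y,0)$.
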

% \begin{rem}
%   The restriction to spheres in Definition \ref{dfn-str} comes from
%   here. It would be more satisfying to allow any surface homologous to
%   $\mathbb{S}^2\times\{\ast\}$ but we would loose Lemma \ref{lem-norm}
%   which would in turn invalidate this computation of $H^1(\tilde
%   M,\mathbb{Z})$. A more careful computation shows that if $S_1$ and
%   $S_2$ both have non zero genus then $H^1(\tilde
%   M,\mathbb{Z})=0$.
% \end{rem}
\begin{proof}
  We will use the Mayer-Vietoris exact sequence for cohomology
  associated with the decomposition $M=S^\eps\cup \tilde M$. Note that
  $S^\eps$ is homotopy equivalent to $S$ and
  since $S_1$ and $S_2$ have trivial normal bundle, $\tilde
  S^\eps$ is homotopy equivalent to $S\times\mathbb{S}^1$.

% Let $\gamma_i$ be the genus of $S_i$.

  The Mayer Vietoris sequence gives 
  \[\cdots\to H^1(M)\to H^1(S^\eps)\oplus
    H^1(\tilde M) \to H^1(\tilde S^\eps)\to H^2(M)\to\cdots\]
  Thus, we have 
  \[\cdots\to 0\to H^1(S_1)\oplus H^1(S_2)\oplus
    H^1(\tilde M) \to H^1(S_1)\oplus\mathbb{Z}\oplus
    H^1(S_2)\oplus\mathbb{Z}\to \mathbb{Z}\oplus\mathbb{Z}\to\cdots\] 
  where the map $ H^1(\tilde S^\eps)\simeq H^1(S_1)\oplus\mathbb{Z}\oplus
    H^1(S_2)\oplus\mathbb{Z}\to H^2(M)$ is given by
  $(a,x,b,y)\mapsto (x-y,0)$. Hence $H^1(\tilde M)$ is isomorphic to $\mathbb{Z}$.
\end{proof}

Since $H^1(\tilde M,\mathbb{Z})\simeq \mathbb{Z}$, $H^1(\tilde
M,\mathbb{R})\simeq \mathbb{R}$ and we can find a closed $1$-form
$\alpha$ on $\tilde M$ which is not exact. Let
$\Gamma$ be the image in $\mathbb{R}$ of the abelian group morphism :
$c\in H_1(\tilde M,\mathbb{Z})\mapsto \int_c\alpha\in\mathbb{R}$. By a
computation similar to the proof of the previous lemma, $H_1(\tilde
M,\mathbb{Z})=\mathbb{Z}$. Hence $\Gamma$ is a
discrete subgroup of $\mathbb{R}$. The
classical period map construction gives :
\begin{lem}\label{lem-S1}
  Let $x_0\in\tilde M$. For any curve $c$ from $x_0$ to $x$,
  $\int_c\alpha$ is well defined as an element $\mathbb{R}/\Gamma$. This
  defines a map $F_1:\tilde M\to\mathbb{R}/\Gamma$ such that the induced map
   $H_1(\tilde M)\to H_1(\mathbb{R}/\Gamma) $ is an isomorphism.
\end{lem}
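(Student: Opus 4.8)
The plan is to run the classical period map construction. First I would identify the target: since $H_1(\tilde M,\mathbb{Z})\cong\mathbb{Z}$, fix a generator $[c_0]$ and put $\lambda=\int_{c_0}\alpha$, so that $\Gamma=\lambda\mathbb{Z}$. Because $\alpha$ is closed but not exact, the de Rham theorem guarantees that some period is nonzero, hence $\lambda\neq 0$, $\Gamma$ is a nontrivial discrete subgroup of $\mathbb{R}$, and $\mathbb{R}/\Gamma$ is diffeomorphic to $\mathbb{S}^1$, carrying a translation-invariant $1$-form $dt$ induced from $dt$ on $\mathbb{R}$. Using that $\tilde M$ is path-connected, I would then define $F_1\colon\tilde M\to\mathbb{R}/\Gamma$ by choosing for each $x$ a path $c_x$ from $x_0$ to $x$ and setting $F_1(x)=\big[\,\int_{c_x}\alpha\,\big]$.

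The next step is to check that $F_1$ is well defined and smooth. If $c,c'$ are two paths from $x_0$ to $x$, then $c$ followed by $c'$ reversed is a loop $\gamma$, and $\int_c\alpha-\int_{c'}\alpha=\int_\gamma\alpha$; since $\alpha$ is closed this integral depends only on $[\gamma]\in H_1(\tilde M,\mathbb{Z})$, and by the very definition of $\Gamma$ it lies in $\Gamma$, so $F_1(x)$ is unambiguous in $\mathbb{R}/\Gamma$. For smoothness, on any simply connected open set $U$ the Poincaré lemma gives $\alpha=df$ with $f\in C^\infty(U)$, and on $U$ the map $F_1$ agrees with $x\mapsto[f(x)+\mathrm{const}]$; in particular $F_1$ is smooth and $F_1^{*}dt=\alpha$ globally.

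Finally I would compute $(F_1)_{*}\colon H_1(\tilde M,\mathbb{Z})\to H_1(\mathbb{R}/\Gamma,\mathbb{Z})$, both groups being infinite cyclic. For a loop $c$ in $\tilde M$ the winding number of $F_1\circ c$ in $\mathbb{R}/\lambda\mathbb{Z}$ is
\[\frac{1}{\lambda}\int_{F_1\circ c}dt=\frac{1}{\lambda}\int_c F_1^{*}dt=\frac{1}{\lambda}\int_c\alpha .\]
Evaluated on the generator $[c_0]$ this equals $\lambda/\lambda=1$, so $(F_1)_{*}[c_0]$ is a generator of $H_1(\mathbb{R}/\Gamma,\mathbb{Z})\cong\mathbb{Z}$; hence $(F_1)_{*}$ carries a generator to a generator and is an isomorphism.

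Everything here is textbook, so I do not expect a genuine obstacle. The only points that deserve care are the de Rham input --- that ``closed and not exact'' yields a nonzero period, which is exactly what makes $\Gamma$ nontrivial and the target a circle rather than a line --- and the bookkeeping with generators in the last step, which is what upgrades $(F_1)_{*}$ from ``some homomorphism $\mathbb{Z}\to\mathbb{Z}$'' to an isomorphism.
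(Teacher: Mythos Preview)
Your argument is correct and is precisely the ``classical period map construction'' that the paper invokes; the paper does not give a proof of this lemma beyond that phrase, so there is nothing to compare your approach against. The only point worth noting is that you implicitly use path-connectedness of $\tilde M$, which holds here since $S_1\cup S_2$ has codimension~$2$ in $M$.
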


$F=(F_2,F_1):\tilde M\to \mathbb{S}^2\times\mathbb{S}^1$ is the map we
are looking for.

\begin{proof}[Proof of Proposition \ref{prop-top}]
  
First note that any connected hypersurface $\Sigma\subset M$ which separates $S_1$ and $S_2$
will be homologous to $\partial S_1^\eps$ for $\eps$ small
enough. Thus it is enough to show that the image of $[\partial
S_1^\eps]$ under $F$ is not zero in
$H_3(\mathbb{S}^2\times\mathbb{S}^1)=H_2(\mathbb{S}^2)\oplus
H_1(\mathbb{S}^1)$, which is routinely deduced from the proofs of
Lemmas \ref{lem-S2} and \ref{lem-H1}.
\end{proof}

We will also need an elementary fact about the connectedness of separating hypersurfaces. Before stating it let us recall that $\Sigma\subset M$ separates $K_1, K_2 \subset M$ if $K_1$ and $K_2$ lie in different connected components of $M\backslash\Sigma$. The result we will need is the following:

\begin{prop}\label{prop-connect}
  Let $M^n$ be a closed orientable manifold with $H_{n-1}(M) = 0$ and
  let $\Sigma^{n-1}\subset M^n$ be a closed orientable
  hypersurface. Let $K_1$ and $K_2$ be two closed subsets of $M$ such
  that $\Sigma$ separates $K_1$ and $K_2$. Then there is a connected
  component of $\Sigma$ which separates $K_1$ and $K_2$. 
\end{prop}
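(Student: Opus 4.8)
The plan is to argue by contradiction: suppose no connected component of $\Sigma$ separates $K_1$ and $K_2$, and build from this a contradiction with $H_{n-1}(M)=0$. Write $\Sigma=\Sigma_1\sqcup\cdots\sqcup\Sigma_r$ for the connected components, each a closed orientable hypersurface, hence carrying a fundamental class $[\Sigma_j]\in H_{n-1}(M)$. Since $H_{n-1}(M)=0$, each $[\Sigma_j]=0$, so $\Sigma_j$ separates $M$ (a closed orientable hypersurface whose fundamental class vanishes is the boundary of a region; this is where orientability of both $M$ and $\Sigma$ is used, via Poincaré--Lefschetz duality $H_{n-1}(M)\cong H^1(M)$ and the fact that the component $\Sigma_j$ is dual to the mod-torsion-free part of a class that must vanish). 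Concretely, each $M\setminus\Sigma_j$ has exactly two components, giving a well-defined locally constant function $\eps_j\colon M\setminus\Sigma\to\{0,1\}$ recording on which side of $\Sigma_j$ a point lies.

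Next I would encode the combinatorics. The map $x\mapsto(\eps_1(x),\dots,\eps_r(x))$ is locally constant on $M\setminus\Sigma$, hence constant on each connected component; this gives an injection from the set of components of $M\setminus\Sigma$ into $\{0,1\}^r$ — more precisely, two components adjacent across a piece of $\Sigma_j$ differ exactly in the $j$-th coordinate, so the components of $M\setminus\Sigma$ form (a subset of the vertices of) a subgraph of the cube graph where edges correspond to sharing a face in some $\Sigma_j$. By hypothesis $\Sigma$ separates $K_1$ and $K_2$, so they lie in components with distinct vectors $v^1\neq v^2\in\{0,1\}^r$; pick a coordinate $j$ with $v^1_j\neq v^2_j$. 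The claim is then that $\Sigma_j$ alone separates $K_1$ and $K_2$: indeed $K_i$ is connected... no — $K_i$ need not be connected, so I must be careful here. The correct statement is that $K_i$, being closed and contained in $M\setminus\Sigma$, meets only finitely many components of $M\setminus\Sigma$; but there is no reason a priori for all of $K_i$ to have the same $\eps_j$-value. So the argument needs one more idea.

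The fix: $\Sigma$ separates $K_1$ from $K_2$ means $K_1$ and $K_2$ lie in different components of $M\setminus\Sigma$ — I will adopt the paper's definition literally, so each $K_i$ is entirely contained in a single component $U_i$ of $M\setminus\Sigma$, and $U_1\neq U_2$. Then $U_1,U_2$ have distinct vectors $v^1\neq v^2$, choose $j$ with $v^1_j\neq v^2_j$, and now $\Sigma_j$ separates $M$ into two pieces, $U_1$ lies in the piece where $\eps_j=v^1_j$ and $U_2$ in the piece where $\eps_j=v^2_j\neq v^1_j$ (because $\eps_j$ is constant on each component of $M\setminus\Sigma$, in particular on $U_1$ and on $U_2$). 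Hence $K_1\subset U_1$ and $K_2\subset U_2$ lie in different components of $M\setminus\Sigma_j$, i.e. $\Sigma_j$ separates $K_1$ and $K_2$ — contradicting the assumption that no single component does. I expect the main obstacle to be the clean justification of the step ``$[\Sigma_j]=0$ in $H_{n-1}(M)$ implies $\Sigma_j$ separates $M$'': this requires invoking Poincaré--Lefschetz duality (or the long exact sequence of the pair $(M,M\setminus\Sigma_j)$ together with Thom isomorphism for the trivial-or-nontrivial normal bundle), and noting that $\Sigma_j$ orientable closed of codimension one in orientable $M$ has trivial normal bundle, so a bicollar exists and ``separating'' is equivalent to ``null-homologous.'' Everything else is bookkeeping on the cube graph.
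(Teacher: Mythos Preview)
Your strategy---encode the side-of-$\Sigma_j$ data by the map $(\eps_1,\dots,\eps_r)\colon M\setminus\Sigma\to\{0,1\}^r$ and then pick a coordinate $j$ where the vectors of $U_1$ and $U_2$ differ---is different from the paper's. The paper instead argues by successive removal: it shows that if a connected component $\Sigma_0$ of $\Sigma$ does \emph{not} separate $K_1$ from $K_2$, then $\Sigma\setminus\Sigma_0$ still does (via a short argument comparing the components of $M\setminus\Sigma_0$ and $M\setminus(\Sigma\setminus\Sigma_0)$ containing $K_1$), and iterates until every remaining component separates.

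There is, however, a genuine gap in your argument, and it is not where you anticipate it. The step ``$[\Sigma_j]=0$ implies $\Sigma_j$ separates'' is standard and the paper dispatches it in a few lines via the intersection pairing, exactly as you sketch. The real gap is your assertion that $(\eps_1,\dots,\eps_r)$ ``gives an injection from the set of components of $M\setminus\Sigma$ into $\{0,1\}^r$.'' Local constancy only says the map is constant on each component; it does not by itself prevent two distinct components from receiving the same vector. Your ``more precisely'' clause (adjacent components differ in exactly one coordinate) describes an edge-preserving graph map to the cube, and such maps need not be injective on vertices. Since your deduction $U_1\neq U_2\Rightarrow v^1\neq v^2$ rests entirely on this injectivity, the argument as written is incomplete.

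The injectivity \emph{does} hold here and the gap is repairable. One clean route is induction on $r$: if $V$ is a connected open set containing the connected two-sided hypersurface $\Sigma_r$, then $V\setminus\Sigma_r$ has exactly two components, distinguished by $\eps_r$ (any putative extra piece, being disjoint from the connected collar of $\Sigma_r$, would be simultaneously open and closed in $V$). Once this lemma is supplied your argument goes through and is arguably more direct than the paper's; the paper's iterative-removal approach simply sidesteps the issue by never needing the global cube picture.
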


This is probably well known but the author has not been able to locate a proof in the litterature.

\begin{proof}
  Let us first remark that since $H_{n-1}(M) = 0$, any connected
  closed hypersurface $\Sigma_0\subset M$ satisfies that
  $M\backslash\Sigma_0$ has exactly two connected components whose
  boundary in $M$ is $\Sigma_0$. The fact that $M\backslash\Sigma_0$ has at most two
  connected components follows from the fact $\Sigma_0$ is two sided
  (since both $\Sigma_0$ and $M$ are orientable). If
  $M\backslash\Sigma_0$ was connected, there would be a closed loop
  $\gamma$ which intersects $\Sigma_0$ exactly once, hence the
  intersection number of $[\gamma]\in H_1(M)$ and $[\Sigma_0]\in
  H_{n-1}(M)$ would be non zero, which is absurd since $H_{n-1}(M)=0$.

  We will now prove the following result: let $\tilde\Sigma$ and $\Sigma_0$ be two disjoint hypersurfaces such
  that:
\begin{itemize}
\item $\Sigma_0$ is connected.
\item $\tilde\Sigma\cup\Sigma_0$ separates $K_1$ and $K_2$.
\item $\Sigma_0$ does not separate $K_1$ and $K_2$.
\end{itemize}
Then $\tilde\Sigma$ separates $K_1$ and $K_2$.

Once we have proved this, the proposition follows since we can remove
all the non-separating components form $\Sigma$ until every connected
component of $\Sigma$ separates $K_1$ and $K_2$.

In order to prove the result above, let $U_0$ be the connected
component of $M\backslash\Sigma_0$ which contains $K_1$. Since
$\Sigma_0$ does not separate $K_1$ and $K_2$, $U_0$ also contains
$K_2$. Now let $\tilde U$ be the connected component of
$M\backslash\tilde\Sigma$ which contains $K_1$.

Since the boundaries of $U_0$ and $\tilde U$ are disjoint and both
$U_0$ and $\tilde U$ contain $K_1$ either
$U_0 \subset \tilde U$ or $\tilde U \subset U_0$. In the first case,
$U_0$ will be a connected component of $M\backslash\Sigma$ which 
contains $K_1$ and $K_2$, which is impossible since $\Sigma$ separates
$K_1$ and $K_2$. 

Hence $\tilde U \subset U_0$, which implies that $\tilde U$ is one of
the connected components of $M\backslash\Sigma$. Since $\Sigma$ separates
$K_1$ and $K_2$, $K_2$ cannot be included in $\tilde U$. Hence $\tilde\Sigma$ separates
$K_1$ and $K_2$.
\end{proof}
\section{Proof of the main theorem}
\label{sec:proof}

We are now ready to prove Theorem \ref{thm-main}.
\begin{proof}[Proof of Theorem \ref{thm-main}]
  Our assumption is that $s=\str_{\ell}(\mathbb{S}^2\times \mathbb{S}^2,g) >
  \tfrac{\sqrt{3}\pi}{2}$. Thus, for any $\eps>0$ we can find two surfaces
  $S_1$ and $S_2$ in $\mathcal{S}_\ell$ such that $d_g(S_1,S_2)=
  s-\eps$. Assume that $\eps>0$ is such that
  \begin{itemize}
  \item $s-3\eps>\frac{\sqrt{3}\pi}{2}$;
  \item the tubular neighborhoods $S_1^\eps$ and $S_2^\eps$ of radius
    $\eps$ around $S_1$ and $S_2$ are disjoint and regular.
  \end{itemize}
  Then if we set $\tilde M=M\backslash (S_1^\eps\cup S_2^\eps)$, its
  two boundary components $\partial_-\tilde M=\partial S_1^\eps$ and
  $\partial_+\tilde M=\partial S_2^\eps$ satisfy $d_g(\partial_-\tilde
  M, \partial_+\tilde M)= s-3\eps$.

  Set $\delta=4-\frac{3\pi^2}{(s-3\eps)^2}$ and note that $\delta>0$. Then we have:
  \[\scal_g\geq 4=\frac{3\pi^2}{d_g(\partial_-\tilde
  M, \partial_+\tilde M)^2}+\delta.\]

  This is exactly what we need to apply Gromov's Theorem
  \ref{thm-gro}. Hence we get an hypersurface $\Sigma\subset\tilde M$
  which separates $S_1$ and $S_2$ such that
  $\Sigma\times\mathbb{S}^1$ admits a metric of the form
  $h=g_\Sigma+u^2dt^2$ for some $u:M\to\mathbb{R}$ such that
  $\scal_h\geq\delta$. The hypersurface coming the minimization
  process in the proof of Theorem~\ref{thm-gro} may be disconnected,
  however by Proposition \ref{prop-connect} we can replace $\Sigma$ by
  one of its connected components which separates $S_1$ and $S_2$. 

  Moreover, by Proposition \ref{prop-top}, there exists a non-zero
  degree map $\Sigma\to\mathbb{S}^2\times\mathbb{S}^1$. Hence there is
  a non-zero degree map 
  $\Sigma\times\mathbb{S}^1\to\mathbb{S}^2\times\mathbb{T}^2$ and we
  can apply Zhu's Theorem \ref{thm-zhu} and Remark \ref{rem-zhu} to
  show that one can find an embedded $2$-sphere $S$ in $\Sigma$ whose
  area is at most $\frac{8\pi}{\delta}=\frac{8\pi
    (s-3\eps)^2}{4(s-3\eps)^2-3\pi^2}$. 

  Since $\Sigma$ is isometrically embedded in
  $M$, $S$ embeds in $M$ with area at most $\frac{8\pi
    (s-3\eps)^2}{4(s-3\eps)^2-3\pi^2}$. Moreover, by the properties of the map $F$
  built in section \ref{sec:topol-prel}, the surface $S$ of $M$
  belongs to $\mathcal{S}_\ell$. Since $\eps>0$ can be as small as one
  wants we get the results. 
\end{proof}

\bibliography{2-systoles}{}
\bibliographystyle{alpha}

\end{document}